\documentclass[12pt]{amsart}
\usepackage{amsmath,amssymb,ifthen}
\usepackage{mathtools}
\usepackage[utf8]{inputenc}
\usepackage[T1]{fontenc}
\setlength{\oddsidemargin}{1cm}
\setlength{\evensidemargin}{1cm}
\setlength{\textwidth}{15.3cm}
\setlength{\textheight}{23.7cm}
\setlength{\topmargin}{-1.0cm}
\pagestyle{empty}
\parindent=6mm
\usepackage{amsthm}
\def\N{\mathbb{N}}
\def\R{\mathbb{R}}
\def\Q{\mathbb{Q}}
\def\IR{\mathbb{R}}
\def\IQ{\mathbb{Q}}

\def\Z{\mathbb{Z}}
\def\c{\mathfrak{c}}
\newcommand{\LIF}{\operatorname{LIF}}

\DeclareMathOperator{\LIN}{LIN}
\DeclareMathOperator{\dom}{dom}
\DeclareMathOperator{\rng}{rng}
\DeclareMathOperator{\id}{id}

\newtheorem{proposition}{Proposition}
\newtheorem{corollary}{Corollary}
\newtheorem{theorem}{Theorem}
\newtheorem{question}{Question}

\newcommand{\ra}{\rightarrow}

\newcommand{\on}{\operatorname}

\author[M.~Lichman]{Mateusz Lichman}
\address[M.~Lichman]{Institute of Mathematics, {\L}\'od\'z University of Technology, al. Politechniki 8, 93-590 {\L}\'od\'z, Poland}
\email{mateusz.lichman@wp.pl}

\author[M.~Pawlikowski]{Micha{\l} Pawlikowski}
\address[M.~Pawlikowski]{Institute of Mathematics, {\L}\'od\'z University of Technology, al. Politechniki 8, 93-590 {\L}\'od\'z, Poland}
\email{michal-pawlikowski4@wp.pl}

\author[Sz.~Smolarek]{Szymon Smolarek}
\address[Sz.~Smolarek]{Institute of Mathematics, {\L}\'od\'z University of Technology, al. Politechniki 8, 93-590 {\L}\'od\'z, Poland}
\email{szymon.smolarek123@wp.pl}

\author[J.~Swaczyna]{Jaros{\l}aw Swaczyna}
\address[J.~Swaczyna]{Institute of Mathematics, {\L}\'od\'z University of Technology, al. Politechniki 8, 93-590 {\L}\'od\'z, Poland}
\email{jaroslaw.swaczyna@p.lodz.pl}

\thanks{The last-named author acknowledge with thanks support received from {\L}\'od\'z University of Technology via "FU$^2$N - a fund for the improvement of the skills of young scientists".}

\title[Free group of Hamel functions]{Free group of Hamel functions}
\keywords{Hamel bases, Hamel functions, free groups, symmetry group of $\IR$}
\subjclass[2010]{Primary 20B99, 26A99 Secondary 54C40, 26A21}

\begin{document}

\maketitle

\begin{abstract}
    We construct a free group of continuum many generators among those autobijections of $\IR$ which are also Hamel bases of $\IR^2$, with identity function included. We also observe two new cases when a real function is a composition of two real functions which are Hamel bases of $\IR^2$.
\end{abstract}
\section{Introduction}
Hamel bases are very basic algebraic notion, firstly used to obtain non-continuous solutions to Cauchy's functional equation. Our note concerns some special Hamel bases of the real plane $\IR^2$, namely those bases which are simultaneously real functions (equivalently, have one-point intersection with every vertical line). Such sets were considered by other authors in the context of their measurability \cite{FNS, Nat, PR}, linear structure of the space $\IR^\IR$ \cite{P2, P3, P4}, lattice structure of $\IR^\IR$ \cite{Mat} and their relations to other families of real functions \cite{P1}.

For the present paper, article \cite{MN} is of particular importance. Its Authors constructed in \cite[Theorem 2.1]{MN} a bijection $f\colon \IR \to \IR$ which is a Hamel basis of the plane. The Authors also noted that \cite[Proof of Corollary 2.1]{MN} an inverse of a Hamel bijection is also a Hamel bijection and \cite[Corollary 2.3]{MN} each real function is a composition of three Hamel functions. Those results were the motivations for our research on behaviour of Hamel functions in the context of function composition. The aim of this note is to prove that there exists a free group of $\mathfrak{c}$ many generators within the family of all Hamel autobijetions of $\IR$.

\section{Preliminaries}

For any set $X$ by $|X|$ we denote its cardinality. By Hamel basis we mean a set $B\subset \R^2$ which is a basis of the linear space $\IR^2$ over $\IQ$. We say that $f\colon \R \to \IR$ is a Hamel function ($f\in \on{HF}$ in short) if $f$ treated as a subset of $\R^2$ is a Hamel basis. Now we recall the definition of free group.

Let $(G, \circ)$ be a group. For a given $g\in G$ and $n\in \N$ we will denote $g\circ \ldots  \circ g$ by $g^n$ and the inverse element of $g$ by $g^{-1}$. For any $z\in \Z, z<0$ we define $g^z\coloneqq (g^{-1})^{-z}$, $g^0=e$ where $e$ is the neutral element of group $(G, \circ)$. For any $z\in \Z, g^z$ will be called the $z$-th power of $g$. We will say that $G$ is a \textit{free group} if there exists a subset $S\subset G$ such that for every $g\in G$ there exists a unique (disregarding trivial variations such as $a\circ b=a\circ c\circ c^{-1}\circ b$ or $a^5=a^2\circ a^3$) representation of $g$ as a product of finitely many powers of elements of $S$. Then the set $S$ will be called the \textit{set of generators}, its elements - generators and the elements of $G$ - words.

For sets $X, Y$, $Z \subset X$ and $f\colon  Z \to Y$ we say that $f$ is a partial function from $X$ to $Y$ and write $f:X\rightharpoonup Y$. In such a case we denote domain of $f$ by $\on{dom}(f)$ and its range by $\on{rng}(f)$.

We say that a function $f\colon X\rightharpoonup \R$, where $X\subset \IR$, is a \emph{partially linearly independent function}  ($f\in \on{PLIF}$ in short) if it is linearly independent over $\Q$ as a subset of $\IR^2$. If moreover $X=\IR$, then we simply say $f$ is a \emph{linearly independent function}  ($f\in \on{LIF}$ in short). For $A\subset \R^2$ by $\on{LIN}_{\Q}(A)$ we will denote the linear subspace of $\IR^2$ over $\Q$ generated by $A$.

The following Proposition is very useful for working with Hamel functions.
\begin{proposition}\label{T:dost HF} \cite[Fact 2.2.]{MN} Let $f:\R\ra \R$ be a function, $f\in \LIF$. Then $f\in \on{HF}$ if and only if $\langle0, x\rangle\in \on{LIN}_{\Q}(f)$ for every $x\in \IR$.
\end{proposition}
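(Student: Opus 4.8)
The plan is to exploit that membership in $\LIF$ already guarantees linear independence over $\IQ$, so that the only property distinguishing $f$ from a genuine Hamel basis is whether its graph $\IQ$-spans the whole plane. Accordingly, I would first reduce the claim to the equivalence
\[
f\in\on{HF}\quad\Longleftrightarrow\quad \on{LIN}_{\IQ}(f)=\IR^2,
\]
which is immediate from the definition of a Hamel basis as a linearly independent spanning set together with the standing hypothesis $f\in\LIF$.

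The forward implication then requires nothing: if $\on{LIN}_{\IQ}(f)=\IR^2$, then in particular $\langle 0,x\rangle\in\on{LIN}_{\IQ}(f)$ for every $x\in\IR$.

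For the converse I would use that $f$ is a total function defined on all of $\IR$. Fixing an arbitrary $\langle a,b\rangle\in\IR^2$, the point $\langle a,f(a)\rangle$ lies on the graph of $f$ (since $a\in\dom(f)=\IR$) and hence in $\on{LIN}_{\IQ}(f)$. From the decomposition
\[
\langle a,b\rangle=\langle a,f(a)\rangle+\langle 0,\,b-f(a)\rangle
\]
the first summand belongs to $\on{LIN}_{\IQ}(f)$ by the previous sentence, while the second belongs to $\on{LIN}_{\IQ}(f)$ by the hypothesis applied with $x=b-f(a)$. Being a $\IQ$-subspace, $\on{LIN}_{\IQ}(f)$ therefore contains $\langle a,b\rangle$; as this point was arbitrary, $\on{LIN}_{\IQ}(f)=\IR^2$ and thus $f\in\on{HF}$.

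The single point that carries the argument is the role of totality: because $\dom(f)=\IR$, the horizontal coordinate of any target vector can be matched by exactly one graph point $\langle a,f(a)\rangle$, which collapses the spanning problem to the production of the purely vertical vectors $\langle 0,x\rangle$ — precisely what the hypothesis supplies. I expect no genuine obstacle beyond recognizing this reduction.
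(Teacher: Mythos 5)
Your proof is correct. Note that the paper itself gives no proof of this proposition --- it is quoted verbatim from \cite[Fact 2.2]{MN} --- and your argument is exactly the standard one: the hypothesis $f\in\LIF$ reduces everything to spanning, and the decomposition $\langle a,b\rangle=\langle a,f(a)\rangle+\langle 0,b-f(a)\rangle$, which is available precisely because $f$ is total, settles it. Nothing is missing.
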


\section{Main result}
Our goal is to prove the following.
\begin{theorem}\label{T: main}
There exists a family $\{f_\beta: \beta < \c \}$ of Hamel bijections such that they are free generators of a group with respect to the composition and all elements of this group but identity are also Hamel bijections.
\end{theorem}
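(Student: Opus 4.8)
The plan is to realize the whole free group as a single orbit. Fix the abstract free group $F$ on generators $\{a_\beta:\beta<\c\}$, so that $|F|=\c$, and aim to construct a bijection $\psi\colon F\to\IR$; I then set $f_\beta:=\psi\circ L_{a_\beta}\circ\psi^{-1}$, where $L_w$ denotes left translation by $w$ on $F$, and more generally $\Phi(w):=\psi\circ L_w\circ\psi^{-1}$. Since $\Phi$ is a homomorphism and $L_w=\id$ only for $w=e$, the map $\Phi$ is automatically faithful, so the $f_\beta$ freely generate $\Phi(F)\cong F$ with respect to composition; thus freeness comes for free and the entire burden is the Hamel property. The graph of $\Phi(w)$ is $G_w=\{\langle\psi(g),\psi(wg)\rangle:g\in F\}$, and by Proposition~\ref{T:dost HF} the bijection $\Phi(w)$ is a Hamel function precisely when $G_w$ is linearly independent over $\IQ$ and $\{0\}\times\IR\subseteq\LIN_\IQ(G_w)$. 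Hence it suffices to build $\psi$ so that, for every $w\neq e$, the set $G_w$ is independent and its span contains the whole vertical axis.

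I would construct $\psi$ by transfinite recursion of length $\c$, fixing one value at a time and driving the process by a bookkeeping enumeration of three kinds of tasks: (D) every $g\in F$ eventually receives a value; (R) every real number eventually lies in the range; and (S) for every $w\neq e$ and every element $h_\xi$ of a fixed Hamel basis $H=\{h_\xi:\xi<\c\}$ of $\IR$, the vector $\langle 0,h_\xi\rangle$ is forced into $\LIN_\IQ(G_w)$. The invariant to preserve at every stage is that the already-defined part of each $G_w$ is $\IQ$-independent and that $\psi$ is injective. A task of type (S) is discharged by introducing fresh, pairwise distinct group elements $g_1,g_2,wg_1,wg_2$ and choosing their $\psi$-values to realize the two-term relation $-\langle\psi(g_1),\psi(wg_1)\rangle+2\langle\psi(g_2),\psi(wg_2)\rangle=\langle 0,h_\xi\rangle$; concretely one sets $\psi(g_2)=\tfrac12\psi(g_1)$ to kill the first coordinate and picks the second coordinates so that $-\psi(wg_1)+2\psi(wg_2)=h_\xi$, leaving one real parameter free in each assignment. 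Once all three task families are exhausted, $\psi$ is a bijection, each $G_w$ with $w\neq e$ is independent, and $\LIN_\IQ(G_w)$ contains $\{0\}\times H$ and therefore the $\IQ$-span $\{0\}\times\IR$, so Proposition~\ref{T:dost HF} finishes the argument.

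The main obstacle is that a single newly assigned value propagates into many word-graphs at once, so I must check that honoring the linear constraints demanded above never secretly creates a dependence somewhere else; this is settled by a counting argument. At a stage $\alpha<\c$ fewer than $\c$ values have been fixed, and assigning a value $y$ to an element $t\in F$ completes, inside each $G_w$, at most the two pairs indexed by $t$ and by $w^{-1}t$, and only for fewer than $\c$ words $w$. The values of $y$ that would either collide with an earlier value or place one of these new pairs in the $\IQ$-span of the already-present pairs of its word form a union of fewer than $\c$ sets, each of cardinality at most $|\alpha|+\aleph_0<\c$; since every quantity at stage $\alpha$ is bounded by $|\alpha|+\aleph_0$, this forbidden set has size $<\c=|\IR|$, so an admissible $y$ exists and, moreover, can still be chosen to satisfy the single linear equation imposed by an active (S)-task. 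The one remaining delicate point is task (R): there $y$ is prescribed, so genericity is unavailable. I would resolve it by assigning $\psi(a_{\beta^*})=r_\alpha$ for a generator index $\beta^*$ occurring in no element of the current (finitely supported, $<\c$-sized) domain; every new pair touching $a_{\beta^*}$ then lies in a word $G_w$ whose previously defined part is empty, so no dependence can arise despite the forced value. Verifying this reconciliation — that the spanning demands, the independence invariant, totality and surjectivity can all be met simultaneously within the cardinality budget — is the heart of the construction, and everything else is routine linear bookkeeping.
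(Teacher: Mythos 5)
Your route is genuinely different from the paper's in its packaging, and the skeleton is viable: the paper constructs the generators $f_\beta$ directly as increasing unions of partial injections and must prove freeness by hand (no nontrivial word can equal $\id_{\IR}$ because every word is kept LIF), whereas you transport the left regular action of the abstract free group $F$ through a single bijection $\psi\colon F\to\IR$, so faithfulness --- hence freeness --- is automatic and everything reduces to building $\psi$. Your (R)-device (spending a brand-new generator letter on each prescribed value, so that every pair it completes lands in a word whose graph was empty, hence receives a single nonzero vector) is correct, and the cardinality bookkeeping is fine. But there is a genuine gap at the (S)-step, and it sits exactly where you yourself locate ``the heart of the construction'' without carrying it out.

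The problem is that your preservation criterion --- forbid values of $y$ that would ``place one of these new pairs in the $\Q$-span of the already-present pairs of its word'' --- only excludes dependences involving a \emph{single} new pair, while the (S)-step assigns four rationally entangled values at once: $\psi(g_2)=\tfrac12\psi(g_1)$ and $2\psi(wg_2)-\psi(wg_1)=h_\xi$. Such forced relations can make a rational combination of \emph{two} new pairs equal a \emph{constant} vector, immune to any choice of the free parameters $t=\psi(g_1)$, $u=\psi(wg_1)$. The unavoidable instance is the inverse word: $G_{w^{-1}}$ simultaneously receives $\langle u,t\rangle$ and $\langle (h_\xi+u)/2,\,t/2\rangle$, and $-\langle u,t\rangle+2\langle (h_\xi+u)/2,\,t/2\rangle=\langle h_\xi,0\rangle$; if $\langle h_\xi,0\rangle$ already lies in $\LIN_\Q$ of the current part of $G_{w^{-1}}$, you have created a dependence that no genericity can repair. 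Moreover, with $g_1,g_2$ merely ``fresh'' the same pathology can occur in unrelated words: if $g_2^{-1}g_1=q^{-1}p$ for $p,q$ already in the domain, then the word $v=pg_1^{-1}=qg_2^{-1}$ receives $\langle t,\psi(p)\rangle$ and $\langle t/2,\psi(q)\rangle$, whose combination $2\langle t/2,\psi(q)\rangle-\langle t,\psi(p)\rangle=\langle 0,\,2\psi(q)-\psi(p)\rangle$ is again constant in $t$. Both defects are repairable, but neither repair is in your text: you must (i) add the skip clause ``do nothing if $\langle 0,h_\xi\rangle\in\LIN_\Q(G_w)$ already'' and observe that $G_{w^{-1}}$ is $G_w$ with coordinates swapped, so that activity of the task simultaneously certifies $\langle h_\xi,0\rangle\notin\LIN_\Q(G_{w^{-1}})$ and protects the inverse word; and (ii) build $g_1,g_2$ from two unused generator letters (your own (R)-trick), which rules out all other coincidences of the form $g_2^{-1}g_1=q^{-1}p$ and forces every word other than $G_w$ and $G_{w^{-1}}$ to receive at most one new pair. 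The paper survives the identical pressure point --- its added pairs $\langle x,y\rangle,\langle -x,x_\kappa-y\rangle$ make the inverse word gain pairs summing to $\langle x_\kappa,0\rangle$, protected by the same swap symmetry together with its explicit skip condition --- so until you perform this verification, your proof is incomplete as written.
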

\begin{proof}
    
The general idea of the proof is to construct functions $f_\beta$, for $\beta< \c$, inductively. Those functions will be free generators of the desired group. By ${_\kappa} f_\beta$ we will denote the state of function $f_\beta$ at $\kappa$ stage of our construction.
We define
$$A\coloneqq \bigcup_{m\geq 1}(\Z\setminus \{0\})^m\times \c^m,$$
$$B\coloneqq \{(n_0, \ldots  , n_{m-1}, \gamma_0, \ldots  , \gamma_{m-1})\in A: \exists i<m-1 \ \ \gamma_i=\gamma_{i+1}, \ \ m \geq 1\}$$
and enumerate
$$A\setminus B=\{(n_0^{\alpha}, \ldots  , n_{m_{\alpha}-1}^{\alpha}, \gamma_0^{\alpha}, \ldots  , \gamma_{m_{\alpha}-1}^{\alpha}): \alpha< \c\}.$$
The set $A\setminus B$ is identified with all possible words written in a reduced form.

Let $\R\times \c=\{(x_{\kappa}, \alpha_{\kappa}): \kappa< \c \}$
be a well-ordering of $\R \times \c$.

For $\alpha, \kappa<\c$, by $_{\kappa}h_{\alpha}$ we denote the word
$$_{\kappa}f_{\gamma_{m_{\alpha}-1}^\alpha}^{n_{m_{\alpha}-1}^{\alpha}} \circ \ldots  \circ _{\kappa}f_{\gamma_0^{\alpha}}^{n_0^{\alpha}}.
$$
Similarly by $h_\alpha$ we denote the word 
$$
h_\alpha=f_{\gamma_{m_{\alpha}-1} ^ \alpha}^{n_{m_{\alpha}-1}^{\alpha}} \circ \ldots  \circ f_{\gamma_0^{\alpha}}^{n_0^{\alpha}}=\bigcup_{\kappa < \c} \  _{\kappa}f_{\gamma_{m_\alpha -1}^\alpha}^{n_{m_{\alpha}-1}^\alpha}\circ \ldots  \circ \  _{\kappa}f_{\gamma_0^{\alpha}}^{n_0^{\alpha}}=\bigcup_{\kappa<\c}\ _{\kappa}h_{\alpha}
$$
More precisely, for $\beta<\c$ we construct a sequence of partial functions $_{\kappa}f_{\beta}$, $\kappa<\c$ such that the following conditions hold.
\begin{itemize}
\item[(I)]
$_{\kappa}f_{\beta_n}^{m_n} \circ \ldots  \circ _{\kappa}f_{\beta_1}^{m_1}\in \on{PLIF}$ for all $n\in \N, m_1, \ldots  , m_n \in \Z\setminus\{0\}, \beta_1\neq \beta_2 \neq\ldots  \neq \beta_n \in \c$. Note that $\beta$'s need not to be pairwise different.
\item[(II)]
$_{\kappa}f_{\beta}$ is one-to-one.

\item[(III)]$ |\bigcup_{\beta < \c}      
{}_{\kappa}f_{\beta}|\leq \omega+|\kappa|$.

\item[(IV)]
$_{\kappa}f_{\beta} \subset \  _{\gamma}f_{\beta}$ for $\kappa<\gamma$.
\item[(V)]
$\langle 0, x_{\kappa}\rangle \in \LIN_{\Q}(_{\kappa+1}h_{\alpha_{\kappa}})$.
\item[(VI)]
$x_{\kappa}\in \dom(_{\kappa+1}f_{\alpha_{\kappa}})$.
\item[(VII)]
$x_{\kappa}\in \rng(_{\kappa+1}f_{\alpha_{\kappa}})$.
\end{itemize}
Then for $\beta < \c$ let $f_{\beta} \coloneqq \bigcup_{\kappa<\c}\ _{\kappa}f_{\beta}$. 
In condition (I) we consider the composition just at those points at which it is already defined.

Note that once we are done, conditions (II), (VI) and (VII) guarantee that for every $\beta < \c$, $f_{\beta}$ is an autobijection of $\R$. Therefore every word constructed from the set $\{f_{\beta}: \beta < \c\}$ is an autobijection. $\circ$ is a group operation on the set of all words.

The condition (I) assures that every word is LIF. Using Proposition \ref{T:dost HF} and the condition (V) we get that every word is a Hamel function.

Observe now, that condition (I) also assures that the set $\{f_{\beta}: \beta < \c\}$ generates a free group.

Indeed, let $n, m\in \N$, $k_1, \ldots  , k_n, l_1, \ldots  , l_m\in \Z$, $\beta_1 \neq \ldots  \neq \beta_n, \gamma_1 \neq \ldots  \neq \gamma_m \in \c$ and suppose that
$$g\coloneqq f_{\beta_n}^{k_n} \circ \ldots  \circ f_{\beta_1}^{k_1}=f_{\gamma_m}^{l_m} \circ \ldots  \circ f_{\gamma_1}^{l_1}\eqqcolon h$$
but
\begin{equation}\label{rown1}
(k_1, \ldots  , k_n, \beta_1, \ldots  , \beta_n)\neq (l_1, \ldots  , l_m, \gamma_1, \ldots  , \gamma_m)
\end{equation}
i.e., the representation of the word $g$ is not unique.

Since $g=h$ we get that $g\circ h^{-1}=\id_{\IR}$. However  \eqref{rown1} implies that word $g\circ h^{-1}$ written in reduced form remains nontrivial. Thus there exists $p\leq m+n$, $r_1, \ldots r_p\in \Z\setminus\{0\}$, $\delta_1 \neq \ldots  \neq \delta_p\in \c$ such that $\id_{\IR}=f_{\delta_p}^{r_p}\circ \ldots  \circ f_{\delta_1}^{r_1}$, which contradicts $f_{\delta_p}^{r_p}\circ \ldots  \circ f_{\delta_1}^{r_1}$ being $\on{LIF}$, as $\on{id}_{\IR}\notin \on{LIF}$. 

As a consequence we get that for $\alpha, \beta <\c$, $\alpha\neq \beta$, $f_{\alpha}\neq f_{\beta}$ and thus the cardinality of $\{f_{\beta}: \beta < \c\}$ is $\c$.

To sum up, we showed that $H=\{h_{\alpha}: \alpha<\c\}\cup\{\id\}$ equipped with the action of functions composition is the desired free group of $\c$ many generators.

It remains to construct functions $_\kappa f_\alpha$ satisfying the conditions (I)-(VII), so take $\gamma<\c$. Assume that for all $\beta<\c$ partial functions $_{\alpha}f_{\beta}$ are constructed and conditions (I)-(VII) hold for $\alpha<\gamma$. If $\gamma=\emptyset$, then we let $_\gamma f_\beta\coloneqq \emptyset$ for $\beta<\c$. If $\gamma > 0$ is a limit ordinal, then for $\beta<\c$ set $_\gamma f_\beta\coloneqq \bigcup_{\alpha<\gamma}$ $_\alpha f_\beta$. Otherwise there is $\kappa$ with $\kappa+1=\gamma$.
\begin{center}
\textbf{STEP I}
\end{center}

In this step we assure that condition (V) is true and conditions (I)-(IV) hold. If $\langle 0, x_{\kappa}\rangle \in \on{LIN}_{\Q}(_{\kappa}h_{\alpha_{\kappa}})$ then set
$$_{\kappa}f'_{\beta}\coloneqq _{\kappa}f_{\beta}$$
for all $\beta<\c$. Otherwise the general idea will be to find $x,y \in \IR$ and extend existing functions in such a way that $_\kappa h_\alpha(x) = y$ and $_\kappa h_\alpha(-x) = x_\kappa - y$ (in order to simplify the notation we set $\alpha:=\alpha_\kappa$). Once it will be done, condition (V) will be satisfied. However, we must make sure that conditions (I)-(IV) still hold, so let us set 
$$C\coloneqq \{x_{\kappa}\}\cup \bigcup_{\alpha<\c} (\dom(_{\kappa}h_{\alpha})\cup \rng(_{\kappa}h_{\alpha}))=\{x_{\kappa}\}\cup \bigcup_{\beta<\c} (\dom(_{\kappa}f_{\beta})\cup \rng(_{\kappa}f_{\beta})).$$

Note that at the step $\kappa$ at most $|\kappa|+\omega$ of words $_\kappa h_\alpha$ are nonempty functions and cardinality of each $_\kappa h_\alpha $ is at most $|\kappa|+\omega$, thus the set $C$ has cardinality less than continuum.

Recall that $h_\alpha=\bigcup_{\kappa < \c} \  _{\kappa}f_{\gamma_{m_\alpha -1}^\alpha}^{n_{m_{\alpha}-1}^\alpha}\circ \ldots  \circ \  _{\kappa}f_{\gamma_0^{\alpha}}^{n_0^{\alpha}}$, let $s\coloneqq \sum\limits_{i=0}^{m_{\alpha}-1} |n^{\alpha}_i |$ be the number of letters used in the word $h_\alpha$, and choose (one can do it since $|C|<\c$ so $C$ does not span $\R$) $x\in \R\setminus \on{LIN}_{\Q}(C)$. Let $z_0\coloneqq x$ and $z_{l+1}\in \R\setminus \on{LIN}_{\Q}(C\cup\{z_0, \ldots  , z_l\})$ for $l<s-1$.

Set
$$D\coloneqq C\cup \{z_0, \ldots  , z_{s-1}\},$$
 $r_0\coloneqq -x$ and pick $r_{l+1}\in \R\setminus \on{LIN}_\Q(D\cup \{r_0, \ldots  , r_l\})$ for $l<s-1$. Finally choose $y\in \R\setminus \on{LIN}_\Q(D\cup\{r_0, \ldots  r_{s-1}\})$ and let $y'\coloneqq x
_\kappa-y$. Then also $y'\in \R\setminus \on{LIN}_\Q(D\cup\{r_0, \ldots  r_{s-1}\})$
\\Let $z_s\coloneqq y$ and $r_s\coloneqq y'$.
\\Our goal now is to make functions $\ {_\kappa}{f'}_{\gamma_{m_{\alpha}-1}^{\alpha}}^{n_{m_{\alpha}-1}^{\alpha}}, \ldots, {_\kappa}{f'}_{\gamma_0^{\alpha}}^{n_0^{\alpha}}$ such that for
\begin{center}
$_{\kappa}h'_{\alpha}\coloneqq 
\ {_\kappa}{f'}_{\gamma_{m_{\alpha}-1}^{\alpha}}^{n_{m_{\alpha}-1}^{\alpha}} \circ \ldots  \circ {_\kappa}{f'}_{\gamma_0^{\alpha}}^{n_0^{\alpha}}$
\end{center}
 we have $_{\kappa}h'_{\alpha}(x)=y$ and $_{\kappa}h'_{\alpha}(-x)=y'$. Therefore we define
\begin{itemize}
\item $p_0\coloneqq 0$
\item $p_j\coloneqq \sum\limits_{i=0}^{j-1}|n_i^{\alpha}|$ for $0<j<m_{\alpha}$
\item $p_{m_{\alpha}}\coloneqq s$
\end{itemize} 
and then for $j< m_{\alpha_\kappa}$:
\begin{itemize}
\item[(a)] if $n_j^{\alpha}<0$ then we set\\
${_\kappa}{f'}_{\gamma_j^{\alpha}}\coloneqq {_\kappa}f_{\gamma_j^{\alpha}}\cup \{\langle z_{p_j+1}, z_{p_j}\rangle, \ldots  , \langle z_{p_{j+1}}, z_{p_{j+1}-1}\rangle, \langle r_{p_j+1}, r_{p_j}\rangle, \ldots  , \langle r_{p_{j+1}}, r_{p_{j+1}-1}\rangle\} $
\item[(b)] $n_j^{\alpha}>0$ then we set\\
$_{\kappa}f'$ $_{\gamma_j^{\alpha}}\coloneqq {_\kappa}f$ $_{\gamma_j^{\alpha}}\cup \{\langle z_{p_j}, z_{p_j+1}\rangle, \ldots  , \langle z_{p_{j+1}-1}, z_{p_{j+1}}\rangle, \langle r_{p_j}, r_{p_j+1}\rangle, \ldots  , \langle r_{p_{j+1}-1}, r_{p_{j+1}}\rangle\} $
\end{itemize}
The idea behind above enlarging is to ensure that ${_\kappa} f_{\gamma^\alpha_j}^{n_j^\alpha}(z_{p_j})=z_{p_{j+1}}$ and ${_\kappa} f_{\gamma^\alpha_j}^{n_j^\alpha}(r_{p_j})=r_{p_{j+1}}$, so roughly speaking to go from $x$ to $y$ via $z$'s and from $-x$ to $y'$ through $r$'s.

In fact, we are abusing the notation a bit, just to avoid introducing more indices. One generator might occur in the word $_{\kappa}h_{\alpha}$ several times. If $f$ is such a partial function and we have already added some points to it, then we keep enlarging the enlarged function $f$ instead of coming back to its previous form.

Then our goal is reached and therefore $\langle 0, x_\kappa\rangle \in \on{LIN}_\Q(_{\kappa} h'_{\alpha})$ since $\langle x, y\rangle, \langle -x, y'\rangle \in {_\kappa} h'_{\alpha}$ and $x_\kappa=y'+y$.

For those functions $_\kappa f_\beta$ that we have not changed during Step I we let $_\kappa f'_\beta\coloneqq _\kappa f_\beta$.
\\Now we will check that conditions (I)-(IV) still hold. Conditions (II)-(IV) are trivially seen to remain true. Note that all but those functions that occur in the word $_\kappa h_{\alpha}$ remain unchanged. This and the definition of $C$ implies that the only words that may have changed during Step I are those words that are composed of partial functions $_\kappa f'_{\gamma_j^{\alpha}}$, $j<m_{\alpha}$.

Let $h$ be such a partial function. Let $E$ be the set of points that we enlarged function $h$ by in Step I. If $E=\emptyset$, we have nothing to check. Assume that $E\neq \emptyset$. Let $k, n \in \N$, $q_1, \ldots  , q_{k+n} \in \Q$ and 
$$\langle u_1, y_1 \rangle, \ldots  , \langle u_k, y_k \rangle \in h\setminus E$$
$$\langle u_{k+1}, y_{k+1} \rangle, \ldots  , \langle u_{k+n}, y_{k+n} \rangle \in E$$
where $x_i\neq x_j$ for $i\neq j$. Suppose that
$$\sum\limits_{i=1}^{k+n} q_i\langle u_i, y_i \rangle = \langle 0, 0 \rangle$$
Then
$$\sum\limits_{i=1}^{k+n} q_iu_i = 0$$

We consider two cases.
\begin{itemize}
\item[(1)] If $\{x, -x\}\not\subset \{u_{k+1}, \ldots  , u_{k+n}\}$, then for each $i\leq n$ we have 
$$x_{k+i}\not\in \on{LIN}_\Q(\{u_1, \ldots  , u_{k+n}\}\setminus \{u_{k+i}\})$$ (see the definition of $z_0, \ldots  , z_s, r_0, \ldots  , r_s$), thus $q_{k+i}=0$. We get that
$$\sum\limits_{i=1}^k q_i\langle u_i, y_i \rangle = \langle 0, 0 \rangle$$
and since $h\setminus E \in \on{PLIF}$, we get that $q_i=0$ for $i\leq k$. 
\item[(2)] If $\{x, -x\}\subset \{u_{k+1}, \ldots  , u_{k+n}\}$, then $x=u_j$, $-x=u_l$ for some 
$$j, l \in \{k+1, \ldots  , k+n \}.$$
Then for each $i\leq n$, $l-k\neq i\neq j-k$ we have 
$$u_{k+i}\not\in \on{LIN}_\Q(\{u_{k+1}, \ldots  , u_{k+n}\}\setminus \{u_{k+i}\})$$
and like in case (1) we get $q_{k+i}=0$ for those $i$. Hence we get
$$\sum\limits_{i=1}^{k} q_i\langle u_i, y_i \rangle + q_j\langle x, y_j \rangle + q_l\langle -x, y_l \rangle = \langle 0, 0 \rangle$$
and
$$
\sum\limits_{i=1}^{k} q_iy_i + q_jy_j + q_ly_l = 0.
$$
Since $y_l\not\in \on{LIN}_\Q(\{y_1, \ldots  , y_k, y_j \})$ we have $q_l=0$. Similarly we show that $q_j=0$ and thus we get
$$\sum\limits_{i=1}^k q_i\langle u_i, y_i \rangle = \langle 0, 0 \rangle$$
and since $h\setminus E \in \on{PLIF}$ we get that $q_i=0$ for $i\leq k$. 
\end{itemize}

Finally we get that $h\in \on{PLIF}$.

\begin{center}
\textbf{STEP II}
\end{center}

In this step we assure that condition (VI) is true and conditions (I)-(V) hold. 

If $x_{\kappa}\in \dom(_{\kappa}f'_{\alpha_{\kappa}})$ then set $_\kappa f''_{\beta}\coloneqq {} _\kappa f'_{\beta}$ for all $\beta<\c$. Otherwise let 
$$F\coloneqq \bigcup_{\alpha<\c} (\dom(_{\kappa}h'_{\alpha})\cup \rng(_{\kappa}h'_{\alpha}))=\bigcup_{\beta<\c} (\dom({_\kappa}f'_{\beta})\cup \rng({_\kappa}f'_{\beta}))$$

Note that at the step $\kappa$ at most $|\kappa|+\omega$ of words $_\kappa h'_\alpha$ are nonempty functions and cardinality of each $_\kappa h'_\alpha $ is at most $|\kappa|+\omega$, thus set $F$ has cardinality less than continuum. Therefore one can choose $y\in \R\setminus \LIN_\Q(F)$. Then define $_\kappa f''_{\alpha_{\kappa}}\coloneqq {_\kappa} f'_{\alpha_{\kappa}} \cup \{\langle x_\kappa, y \rangle \}$. For $\beta \neq \alpha_\kappa$ let $_\kappa f''_\beta\coloneqq {_\kappa} f'_\beta$. 

In any case by $_\kappa h''_\alpha$ we denote suitable composition of $_\kappa f''_\beta$'s.

Conditions (II)-(VI) clearly hold. We will check condition (I). Note that we only need to consider words that were enlarged by the point $\langle x, y \rangle$ for some $x\in \R$. Let h be such a word. Let $k \in \N$, $q_1, \ldots  , q_k, p \in \Q$ and 
$$\langle u_1, y_1 \rangle, \ldots  , \langle u_k, y_k \rangle, \langle x, y \rangle \in h$$
where $x\neq u_i\neq u_j$ for $i\neq j$. Suppose that
$$\sum\limits_{i=1}^{k} q_i\langle u_i, y_i \rangle + p\langle x, y \rangle= \langle 0, 0 \rangle$$
Then
$$\sum\limits_{i=1}^{k} q_iy_i + py = 0$$
Since $y\not\in \on{LIN}_\Q(\{y_1, \ldots  , y_k\})$ we get that $p=0$ and
$$\sum\limits_{i=1}^{k} q_i\langle u_i, y_i \rangle = \langle 0, 0 \rangle$$
Since $h\setminus\{\langle x, y \rangle\}\in \on{PLIF}$ we get that $q_1= \ldots  =q_k=0$.

Finally $h\in \on{PLIF}$.

\begin{center}
\textbf{STEP III}
\end{center}

In this step we assure that condition (VII) is true and conditions (I)-(VI) hold.

If $x_{\kappa}\in \on{rng}(_{\kappa}f''_{\alpha_{\kappa}})$ then set $_{\kappa+1} f_{\beta}\coloneqq _\kappa f''_{\beta}$ for $\beta<\c$.
Otherwise let 
$$G\coloneqq \bigcup_{\alpha<\c} (\on{dom}(_{\kappa}h''_{\alpha})\cup \on{rng}(_{\kappa}h''_{\alpha}))=\bigcup_{\beta<\c} (\on{dom}(_{\kappa}f''_{\beta})\cup \on{rng}(_{\kappa}f''_{\beta}))$$

Note that at the step $\kappa$ at most $|\kappa|+\omega$ of words $_\kappa h''_\alpha$ are nonempty functions and cardinality of each $_\kappa h''_\alpha $ is at most $|\kappa|+\omega$, thus the set $G$ has cardinality less than continuum. Therefore one can choose $x\in \R\setminus \on{LIN}_\Q(G)$. Then define $_{\kappa+1}f_{\alpha_{\kappa}}\coloneqq {_\kappa} f''_{\alpha_{\kappa}} \cup \{\langle x, x_\kappa \rangle \}$. For $\beta \neq \alpha_\kappa$ let $_{\kappa+1}f_\beta\coloneqq {_\kappa} f''_\beta$. 

Conditions (II)-(VII) clearly hold. We will check condition (I). Note that we only need to consider words that were enlarged by the point $\langle x, y \rangle$ for some $y\in \R$. Let h be such a word. Let $k \in \N$, $q_1, \ldots  , q_k, p \in \Q$ and 
$$\langle u_1, y_1 \rangle, \ldots  , \langle u_k, y_k \rangle, \langle x, y \rangle \in h$$
where $x\neq u_i\neq u_j$ for $i\neq j$. Suppose that
$$\sum\limits_{i=1}^{k} q_i\langle u_i, y_i \rangle + p\langle x, y \rangle= \langle 0, 0 \rangle.$$
Then
$$\sum\limits_{i=1}^{k} q_iu_i + px = 0.$$
Since $x\not\in \LIN_\Q(\{u_1, \ldots  , u_k\})$ we get that $p=0$ and
$$\sum\limits_{i=1}^{k} q_i\langle u_i, y_i \rangle = \langle 0, 0 \rangle.$$
Since $h\setminus\{\langle x, y \rangle\}\in \on{PLIF}$ we get that $q_1= \ldots  =q_k=0$.

Finally $h\in \on{PLIF}$.
\end{proof}
The above Theorem suggest the following question:
\begin{question}
Characterize these groups whose isomorphic copies may be found within the family of Hamel bijections with identity function included. In particular, is it possible to find there a free group of $2^\c$ many generators?
\end{question}
\section{Remarks}

We conclude our note by some observations concerning \cite[Problem 2.1]{MN}. As stated in \cite[Corollary 2.3]{MN}, each real function is a composition of three Hamel functions, and it is an open problem if there is a real function for which two Hamel functions won't be enough. By  \cite[Theorem 2.4]{MN} every $\on{LIF}$ function is a composition of two Hamel functions. We will observe that similar result holds under different assumptions.
\begin{corollary}\label{stale}
If $g$ is a constant function, then $g$ is a composition of two Hamel functions.
\end{corollary}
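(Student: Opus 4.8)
The plan is to write $g=\phi\circ\psi$ with $\psi,\phi\colon\R\to\R$ both Hamel functions, constructed \emph{simultaneously} by transfinite recursion of length $\c$. Write $g\equiv c$. Since no Hamel function is constant (a constant graph contains three $\Q$-linearly dependent points, so it is not even $\LIF$), we cannot take $\psi$ surjective; instead $\phi$ will collapse the whole range of $\psi$ to the single value $c$, while remaining a Hamel function thanks to the freedom we keep on $\R\setminus\rng(\psi)$.

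Concretely, I would fix enumerations of $\R$ for four families of requirements and, processing one requirement per stage $\xi<\c$, build increasing chains of partial functions $\psi_\xi$ and $\phi_\xi$ so that at every stage two invariants hold: both $\psi_\xi,\phi_\xi\in\PLIF$; and the coupling $\phi_\xi(\psi_\xi(x))=c$ holds for every $x\in\dom(\psi_\xi)$. The four requirement types are: (T1) put a prescribed $a\in\R$ into $\dom(\psi)$; (T2) put a prescribed $b\in\R$ into $\dom(\phi)$; (T3) force $\langle 0,t\rangle\in\LIN_\Q(\psi)$ for prescribed $t$; (T4) force $\langle 0,t\rangle\in\LIN_\Q(\phi)$ for prescribed $t$. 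Meeting (T1),(T2) for all reals makes $\psi,\phi$ total; (T3),(T4) together with Proposition \ref{T:dost HF} upgrade the resulting $\LIF$ functions to Hamel functions; and the coupling passes to the unions $\psi=\bigcup_{\xi<\c}\psi_\xi$, $\phi=\bigcup_{\xi<\c}\phi_\xi$, giving $\phi\circ\psi=g$.

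The one subtle point is that whenever we extend $\psi$ we are forced, by the coupling, to extend $\phi$ by a point whose second coordinate is the fixed value $c$. Thus in (T1), to set $\psi(a)=y$ we must also place $\langle y,c\rangle$ in $\phi$; I would choose the target $y$ to avoid the set $\{y:\langle a,y\rangle\in\LIN_\Q(\psi_\xi)\}$ (to keep $\psi_\xi\in\PLIF$), the domain $\dom(\phi_\xi)$, and the set $\{y:\langle y,c\rangle\in\LIN_\Q(\phi_\xi)\}$ (to keep $\phi_\xi\in\PLIF$ after the forced addition). Each subspace $\LIN_\Q(\psi_\xi),\LIN_\Q(\phi_\xi)$ has $\Q$-dimension at most $|\xi|+\omega<\c$, hence cardinality $<\c$, so each forbidden set of $y$ has size $<\c$ and an admissible $y$ exists. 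Requirement (T3) is handled as in STEP I of the main proof: pick a fresh $x\neq 0$ and a fresh $y$ and adjoin $\langle x,y\rangle,\langle -x,t-y\rangle$ to $\psi$ (their sum is $\langle 0,t\rangle$), adjoining the forced points $\langle y,c\rangle,\langle t-y,c\rangle$ to $\phi$; the same counting shows all choices can be made generically so that both $\PLIF$ and the coupling survive. Requirements (T4) and (T2) affect $\phi$ only and are easier: one adds points $\langle b,w\rangle$ (resp. $\langle b,w\rangle,\langle -b,t-w\rangle$) at a fresh $b\notin\rng(\psi_\xi)$ with $w$ generic, so no value is forced and $\PLIF$ is kept by the identical argument.

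The main obstacle, and the reason the statement is not completely trivial, is exactly this forced-value interaction: because every point $\phi$ receives from a $\psi$-extension must lie on the horizontal line $\R\times\{c\}$, one has to be sure that these infinitely many same-height points never become $\Q$-dependent. This is precisely what the choice $\langle y,c\rangle\notin\LIN_\Q(\phi_\xi)$ guarantees at each step, and since linear independence is finitary it is inherited by the union $\phi=\bigcup_{\xi<\c}\phi_\xi$ (and likewise for $\psi$). No appeal to regularity of $\c$ is needed, since each $\psi_\xi,\phi_\xi$ individually has size $<\c$. The whole argument is uniform in $c$, covering both $c=0$ and $c\neq 0$.
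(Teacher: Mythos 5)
Your requirements (T1), (T2) and (T4) go through by the counting you describe, but step (T3) contains a genuine gap that genericity cannot repair. When you force $\langle 0,t\rangle\in\LIN_\Q(\psi)$ by adjoining $\langle x,y\rangle$ and $\langle -x,t-y\rangle$ to $\psi$, the coupling forces you to adjoin $\langle y,c\rangle$ and $\langle t-y,c\rangle$ to $\phi$, and these two points sum to $\langle t,2c\rangle$ \emph{independently of the choice of $x$ and $y$}. Hence if $\langle t,2c\rangle\in\LIN_\Q(\phi_\xi)$ at that stage, the extended $\phi$ fails $\PLIF$ no matter how freshly you choose: your counting argument only shows that each forbidden set of values for the free parameter $y$ has size $<\c$, and it is silent about an offending combination in which the free parameter cancels out. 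This situation really occurs: as soon as some forced point $\langle y_0,c\rangle$ lies in $\phi_\xi$, the real $t=2y_0$ satisfies $\langle t,2c\rangle=2\langle y_0,c\rangle\in\LIN_\Q(\phi_\xi)$, while $\langle 0,2y_0\rangle\notin\LIN_\Q(\psi_\xi)$ (since $y_0$ was chosen fresh, and later generic additions avoid creating unintended relations), so the (T3) requirement for $t=2y_0$ is still active when its stage arrives and its execution destroys $\PLIF$ of $\phi$. The obstruction is structural: every point that $\phi$ inherits from a $\psi$-extension lies on the line $\R\times\{c\}$, so any $\Q$-combination of such forced points with coefficient sum $\sigma$ produces a vector of the form $\langle \cdot,\sigma c\rangle$ whose membership in $\LIN_\Q(\phi_\xi)$ you do not control; worse, since $\phi$ must end up a Hamel function, eventually $\langle 0,c\rangle\in\LIN_\Q(\phi)$, after which varying $\sigma$ buys nothing. (The same failure occurs for $c=0$ with $\langle t,0\rangle$ in place of $\langle t,2c\rangle$.)

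The paper's proof sidesteps this interaction entirely, and the way it does so points to the missing invariant in your recursion. It takes from \cite[Theorem 2.3]{MN} a Hamel function $f$ whose \emph{range is linearly independent over $\Q$}; then the partial function $h_0\equiv c$ on $\rng(f)$ is automatically $\PLIF$ (a relation $\sum_i q_i\langle y_i,c\rangle=\langle 0,0\rangle$ gives $\sum_i q_iy_i=0$, hence all $q_i=0$ by independence of the $y_i$, for every $c$ including $c=0$), and \cite[Lemma 2.1]{MN} extends $h_0$ to a Hamel function $h$ with $g=h\circ f$. Independence of $\rng(\psi)$ is exactly what your construction fails to maintain: your (T3) inserts the range elements $y$ and $t-y$, whose sum $t$ may lie in the span of earlier range elements, which simultaneously ruins independence of the range and, through the forced horizontal points, independence of $\phi$. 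If you insist on a self-contained recursion, you must build $\psi$ keeping $\rng(\psi)$ $\Q$-independent and meet (T3) through relations among first coordinates rather than sums of new second coordinates --- at which point you are essentially re-proving \cite[Theorem 2.3]{MN} before running your (now decoupled) argument for $\phi$.
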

\begin{proof}
Let $g$ be a constant function $g\equiv c$. By \cite[Theorem 2.3]{MN}, there is a Hamel function $f$ whose range is linearly independent over $\mathbb{Q}$. Then let us set $h_0: \on{rng}(f)\to \IR$ to be constantly equal to $c$, and then let $h$ be a Hamel function extending the $h_0$ obtained by \cite[Lemma 2.1]{MN}. Note that $g=h \circ f$. 
\end{proof}

\begin{corollary}
If $g: \on\IR \to\IR$ and there exists a Hamel basis $H$ such that $g|_{H\cup \{0\}}$ is constant then $g$ is a composition of two Hamel functions.
\end{corollary}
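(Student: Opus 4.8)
The plan is to realize $g$ as $h\circ f$, where $f$ is an \emph{explicitly} built Hamel function whose range is a full $\Q$-basis $B$ of $\IR$, and $h$ is obtained by prescribing it on $B$ and extending. The guiding principle is this: if $f$ is a Hamel function whose nonempty fibers $f^{-1}(r)$ each lie inside a single level set of $g$, then the rule $h(r):=g(x)$ for any $x\in f^{-1}(r)$ is well defined on $\rng(f)$; and if moreover $\rng(f)$ is linearly independent over $\Q$, then $h$ restricted to $\rng(f)$ is automatically in $\on{PLIF}$ (its domain is linearly independent), so it extends to a Hamel function by \cite[Lemma 2.1]{MN}. Thus everything reduces to building one Hamel function $f$ with linearly independent range whose fibers refine the partition of $\IR$ into level sets of $g$.

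First I would fix a $\Q$-basis $B$ of $\IR$ containing a distinguished element $r_0$, and define $f$ by collapsing the whole set $H\cup\{0\}$ to $r_0$, i.e. $f(v):=r_0$ for $v\in H\cup\{0\}$, while letting $f$ restrict to a bijection of $\IR\setminus(H\cup\{0\})$ onto $B\setminus\{r_0\}$. Since $g|_{H\cup\{0\}}\equiv c$, the only nonsingleton fiber $f^{-1}(r_0)=H\cup\{0\}$ lies in $g^{-1}(c)$, and every other fiber is a singleton; hence the fibers refine the level sets of $g$, and $h$ is well defined on $B=\rng(f)$ by $h(r_0)=c$ and $h(b)=g(f^{-1}(b))$ for $b\neq r_0$. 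This works no matter how large $g^{-1}(c)$ actually is: points of $g^{-1}(c)$ outside $H\cup\{0\}$ are simply sent to their own basis elements, on which $h$ again takes the value $c$.

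Next I would verify that $f\in\on{HF}$ using Proposition \ref{T:dost HF}. Linear independence of the graph reduces, after separating the points lying over $r_0$ from the injective part, to the fact that $H\cup\{0\}$ is \emph{affinely} independent over $\Q$ (a relation $\sum q_i v_i=0$ with $\sum q_i=0$ and $v_i\in H\cup\{0\}$ forces all $q_i=0$), together with $B$ being linearly independent. For the spanning condition, the crucial computation is $(h,0)=(h,r_0)-(0,r_0)\in\LIN_\Q(f)$ for every $h\in H$, where $(0,r_0)\in f$ precisely because $0$ lies in the collapsed fiber; as $H$ spans $\IR$ this gives $\langle y,0\rangle\in\LIN_\Q(f)$ for all $y$, whence $\langle 0,b\rangle\in\LIN_\Q(f)$ for every $b\in B$, and, $B$ spanning $\IR$, $\langle 0,y\rangle\in\LIN_\Q(f)$ for all $y$.

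The main obstacle is the tension between $f$ being highly non-injective (a Hamel function with linearly independent range cannot be injective, since producing every $\langle 0,y\rangle$ forces repeated range values) and its graph remaining linearly independent; this is exactly what fails for a general $g$ and is what the hypothesis buys us, by allowing us to collapse only the affinely independent set $H\cup\{0\}$. The subtle, indispensable point is the inclusion of $0$: the differences $h-0=h$ (equivalently, the membership $(0,r_0)\in f$) are what make the collapsed fiber rich enough to generate all of $\IR$ in the first coordinate, whereas the differences $h-h'$ of basis elements alone span only a hyperplane. Once $f$ is in hand the rest is routine: $h|_B\in\on{PLIF}$, extend it to a Hamel function via \cite[Lemma 2.1]{MN}, and note $h\circ f=g$ since $\rng(f)=B$ and the extension leaves $h$ unchanged on $B$.
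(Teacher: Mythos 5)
Your proposal is correct and is essentially the paper's own proof: both collapse $H\cup\{0\}$ onto a single element of a target Hamel basis, map the rest of $\IR$ bijectively onto the remaining basis elements, define $h$ on the (linearly independent) range by factoring $g$ through the one multi-valued fiber where $g$ is constant, and extend $h$ to a Hamel function via \cite[Lemma 2.1]{MN}. The only difference is cosmetic: you verify $f\in \on{HF}$ directly from Proposition \ref{T:dost HF} (independence via the affine independence of $H\cup\{0\}$, spanning via $\langle h,0\rangle=\langle h,r_0\rangle-\langle 0,r_0\rangle$), where the paper simply cites the proof of \cite[Theorem 2.3]{MN}.
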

\begin{proof}
Let $g: \on\IR \to\IR$ and $H$ be a Hamel basis. Assume that $g|_{H\cup \{0\}}$ is constant. Let $H'$ be any Hamel basis. Let $f|_{\IR\setminus H}$ be a bijection onto $H'$ and $f|_H \equiv f(0)$. By the proof of \cite[Theorem 2.3]{MN}, $f$ is a Hamel function. Clearly, $\on{rng}(f)=H'$. 
\\Now, let $h_0: \on H' \to \IR$ be defined by $h_0=g \circ f^{-1}$. There is one point in $H'$, namely $f(0)$, on which $f^{-1}$ is multi-valued. Nevertheless, the composition still makes sense as $g$ is constant on the respective set. By \cite[Lemma 2.1]{MN}, $h_0$ can be extended to a Hamel function $h: \on\IR \to\IR$. Clearly, $g=h\circ f$.
\end{proof}

\subsection*{Acknowledgements} We are indebted to Szymon Głąb for drawing our attention to groups consisting of Hamel autobijections of $\IR$.

\bibliographystyle{plain}
{}
\end{document}